\theoremstyle{plain}
\newtheorem{Thm}{Theorem}[section]
\newtheorem{Lemma}[Thm]{Lemma}
\newtheorem*{Cor*}{Corollary}
\newtheorem*{definition*}{Definition}
\newtheorem*{Thm*}{Theorem}
\theoremstyle{remark}
\newtheorem{Rmk}[Thm]{\bf{Remark}}
\journal{Computers \& Mathematics with Applications}
\begin{document}
\begin{frontmatter}
\title{
A New Condition for the Concavity Method of Blow-up Solutions to Semilinear Heat Equations
}

\author[syc]{Soon-Yeong Chung}
\address[syc]{Department of Mathematics and Program of Integrated Biotechnology, Sogang University, Seoul 04107, Korea}
\ead{sychung@sogang.ac.kr}

\author[mjc]{Min-Jun Choi\corref{cjhp}}
\address[mjc]{Department of Mathematics, Sogang University, Seoul 04107, Korea}
\cortext[cjhp]{Corresponding author}
\ead{dudrka2000@sogang.ac.kr}

\begin{abstract}

In this paper, we consider the semilinear heat equations under Dirichlet boundary condition
\begin{equation*}
	\begin{cases}
		u_{t}\left(x,t\right)=\Delta u\left(x,t\right)+f(u(x,t)), & \left(x,t\right)\in \Omega\times\left(0,+\infty\right),\\
		u\left(x,t\right)=0, & \left(x,t\right)\in\partial \Omega\times\left[0,+\infty\right),\\
		u\left(x,0\right)=u_{0}\geq0, & x\in\overline{\Omega},
	\end{cases}
\end{equation*}
where $\Omega$ is a bounded domain of $\mathbb{R}^{N}$ $(N\geq1)$ with smooth boundary $\partial\Omega$. The main contribution of our work is to introduce a new condition 
\begin{center}
	(C)$\hspace{1cm} \alpha \int_{0}^{u}f(s)ds \leq uf(u)+\beta u^{2}+\gamma,\,\,u>0$
\end{center}
for some $\alpha, \beta, \gamma>0$ with $0<\beta\leq\frac{\left(\alpha-2\right)\lambda_{0}}{2}$, where $\lambda_{0}$ is the first eigenvalue of Laplacian $\Delta$, and we use the concavity method to obtain the blow-up solutions to the semilinear heat equations. In fact, it will be seen that the condition (C) improves the conditions known so far.
\end{abstract}

\begin{keyword}
Semilinear Heat Equations, Concavity method, Blow-up.
\MSC [2010]  39A14 \sep 35K57
\end{keyword}
\end{frontmatter}
%35K57 : Reaction-diffusion equation
%39A14 : Partial difference equations
%%%%%%%%%%%%%%%%%%%%%%%%%%%%%%%%%%%%%%%%%%%%%%%%%%%%%%%%%%%%%%%%%%%%%%%%%%%%%%%%%%%%%%%%%%%%%%%%%%%%%%%%%%%%%%%%%%%%%%%%%%%%%%%%%%%%%%%%%%%%%%%%%%%%%%It follows from \cite{PW} that the solution $u$ of \eqref{equation} is nonnegative in $\overline{\Omega}\times[0,\infty)$
\section{Introduction}

In this paper, we discuss the blow-up solutions for the following semilinear heat equations 
\begin{equation}\label{equation}
\begin{cases}
u_{t}\left(x,t\right)=\Delta u\left(x,t\right)+f\left(u\left(x,t\right)\right), & \left(x,t\right)\in \Omega\times\left(0,+\infty\right),\\
u\left(x,t\right)=0, & \left(x,t\right)\in\partial \Omega\times\left[0,+\infty\right),\\
u\left(x,0\right)=u_{0}\left(x\right)\geq0, & x\in\overline{\Omega},
\end{cases}
\end{equation}
where $\Omega$ is a bounded doamin in $\mathbb{R}^{N}(N\geq1)$ with smooth boundary $\partial \Omega$ and $f$ is locally Lipschitz continuous on $\mathbb{R}$, $f(0)=0$ and $f(u)>0$ for $u>0$. Moreover, the initial data $u_{0}$ is a non-negative function in $C^{1}(\overline{\Omega})$ satisfying that $u_{0}(x)=0$ on $\partial\Omega$.

The blow-up solutions to \eqref{equation} have been studied by many authors. To determine of sufficient conditions for the blow-up of solutions to \eqref{equation}, they assumed some conditions on the nonhomogeneous term $f$, the $C^{1}$ condition in \cite{FM}, $\int_{a}^{\infty}\frac{ds}{f(s)}<\infty$ in \cite{CF, GV}, some homogeneity in \cite{KST}, the nonnegativity of $f'$ with convexity in \cite{M}, respectively. Commonly, they used a differential inequality technique and a comparison principle to obtain blow-up solutions. On the other hand, Levine and Payne \cite{L, LP, LPa} introduced a new and elegant tool for deriving estimates and giving criteria for blow-up, which called concavity method. Afterwords, using concavity method, many authors derived blow-up solution to \eqref{equation} and whenever they did so, they introduced some conditions for $f$. For example, in \cite{PP}, the authors gave the condition for $f$ as follows: for some $\epsilon>0$,
\begin{equation}\label{PP}
(2+\epsilon)\int_{0}^{u}f(s)ds\leq uf(u),\,\,u>0.
\end{equation}
In \cite{BB}, the condition \eqref{PP} was relaxed to
\begin{equation}\label{IBB}
(2+\epsilon)\int_{0}^{u}f(s)ds\leq uf(u) + c^2,\,\,u>0.
\end{equation}

Looking into the above conditions \eqref{PP} and \eqref{IBB} more closely, we can see that there are independent of the eigenvalue which depends on the domain $\Omega$. However, our main proof consists of a series of inequalities and the Poincare inequality including the eigenvalue. From this fact, we can expect to develop an improved condition which refines \eqref{PP} or \eqref{IBB}, and depends on the domain $\Omega$. Being motivated by this point of view, we develop a new condition as follows: for some $\alpha, \beta, \gamma>0$,
\[
\hbox{(C)\hspace{1cm}  $\alpha \int_{0}^{u}f\left(s\right)ds\leq uf(u) + \beta u^{2} + \gamma,\,\,u>0$},
\]
where $0<\beta\leq\frac{\left(\alpha-2\right)\lambda_{0}}{2}$ and $\lambda_{0}$ is the first eigenvalue of the Laplacian $\Delta$. 

The main theorem of this paper is as follows:
\begin{Thm*}\label{BlowB}
	Let the function $f$ satisfy the condition (C). If the initial data $u_{0}$ satisfies
	\begin{equation}\label{11}
	-\frac{1}{2}\int_{\Omega}\left|\nabla u_{0}\left(x\right)\right|^{2}dx+\int_{\Omega}\left[\int_{0}^{u_{0}\left(x\right)}f(s)ds-\gamma\right]dx>0,
	\end{equation}
	then the nonnegative classical solution $u$ to the equation \eqref{equation} blows up at finite time $T^{*}$ in the sense of
	\[
	\lim_{t\rightarrow T^{*}}\int_{0}^{t}\sum_{x\in\overline{S}}u^{2}\left(x,s\right)ds=+\infty,
	\]
	where $\gamma$ is the constant in the condition (C).
\end{Thm*}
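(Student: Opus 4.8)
The plan is to implement the classical concavity method of Levine, adapted so that the new condition (C) feeds in at exactly the right place. I would define the auxiliary functional
\[
M(t)=\int_{0}^{t}\int_{\Omega}u^{2}(x,s)\,dx\,ds + C_{1}(T-t) + C_{2},
\]
where $C_{1},C_{2}>0$ and $T>0$ are constants to be chosen later, and aim to show that $M^{-\sigma}$ is concave for a suitable $\sigma>0$, which forces $M$ to blow up in finite time. (Note the theorem's blow-up quantity is written with a sum over $\overline{S}$, suggesting a discrete/graph analogue in the authors' framing, but the continuous analogue of the functional is the natural object here.) The engine of the method is the differential inequality
\[
M(t)M''(t)-(1+\sigma)\bigl(M'(t)\bigr)^{2}\ge 0,
\]
which yields concavity of $M^{-\sigma}$; once this holds with $M(0)>0$ and $M'(0)>0$, a finite blow-up time $T^{*}$ follows.

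First I would compute $M'(t)=\int_{\Omega}u^{2}\,dx - C_{1}$ and then $M''(t)=2\int_{\Omega}u u_{t}\,dx$. Substituting the equation and integrating by parts against the Dirichlet boundary condition gives
\[
M''(t)=2\int_{\Omega}u\,\Delta u\,dx + 2\int_{\Omega}u f(u)\,dx = -2\int_{\Omega}|\nabla u|^{2}\,dx + 2\int_{\Omega}u f(u)\,dx.
\]
The next step is to introduce the energy functional
\[
E(t)=\tfrac{1}{2}\int_{\Omega}|\nabla u|^{2}\,dx-\int_{\Omega}\int_{0}^{u}f(s)\,ds\,dx,
\]
and to check, again using the equation, that $E(t)$ is nonincreasing (its derivative is $-\int_{\Omega}u_{t}^{2}\,dx\le 0$), so that $E(t)\le E(0)<0$ by hypothesis \eqref{11}. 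This is where condition (C) enters: I would use (C) to bound $\int_{\Omega}u f(u)\,dx$ from below by $\alpha\int_{\Omega}\int_{0}^{u}f(s)\,ds\,dx - \beta\int_{\Omega}u^{2}\,dx - \gamma|\Omega|$, convert the integral-of-$f$ term into energy via $E(t)$, and then deploy the Poincar\'e (first-eigenvalue) inequality $\lambda_{0}\int_{\Omega}u^{2}\,dx\le\int_{\Omega}|\nabla u|^{2}\,dx$ to absorb the unwanted $\beta u^{2}$ and $|\nabla u|^{2}$ terms. The constraint $0<\beta\le\frac{(\alpha-2)\lambda_{0}}{2}$ is precisely what makes this absorption produce a clean lower bound of the form $M''(t)\ge (2+4\sigma)\int_{\Omega}u^{2}\,dx + (\text{controllable constants})$, matching the exponent $\sigma=\tfrac{\alpha-2}{4}$.

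To close the argument I would invoke the Cauchy–Schwarz inequality in the form
\[
\Bigl(\int_{0}^{t}\int_{\Omega}u u_{s}\,dx\,ds\Bigr)^{2}\le\Bigl(\int_{0}^{t}\int_{\Omega}u^{2}\,dx\,ds\Bigr)\Bigl(\int_{0}^{t}\int_{\Omega}u_{s}^{2}\,dx\,ds\Bigr),
\]
which is the standard device for controlling $(M')^{2}$ by $M$ times the dissipation integral that appears in $M''$. Assembling $M M''-(1+\sigma)(M')^{2}$ and using the above, the cross terms should cancel and leave a nonnegative expression provided $C_{1}$ and $C_{2}$ are chosen compatibly with the sign of $E(0)$ and the strict positivity in \eqref{11}. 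I expect the main obstacle to be the bookkeeping that makes the Cauchy–Schwarz step and the (C)-plus-Poincar\'e estimate fit the \emph{same} value of $\sigma$: one must verify that the constant $\gamma|\Omega|$ and the slack constants introduced by $C_{1},C_{2}$ do not spoil the inequality, and this is exactly where the sharpness of the bound $\beta\le\frac{(\alpha-2)\lambda_{0}}{2}$ is used. Once the differential inequality is established, concavity of $M^{-\sigma}$ and the resulting finite-time blow-up of $\int_{0}^{t}\int_{\Omega}u^{2}$ are routine.
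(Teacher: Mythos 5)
Your proposal follows essentially the same route as the paper: the concavity method applied to $\int_{0}^{t}\int_{\Omega}u^{2}\,dx\,ds$ plus an additive correction, with the energy identity $J(t)=J(0)+\int_{0}^{t}\int_{\Omega}u_{t}^{2}\,dx\,ds$, condition (C) combined with the Poincar\'e (first-eigenvalue) inequality exactly where $\beta\le\frac{(\alpha-2)\lambda_{0}}{2}$ is needed, and Cauchy--Schwarz to close the differential inequality. The only cosmetic difference is that the paper omits your $C_{1}(T-t)$ term and instead absorbs the leftover $\int_{\Omega}u_{0}^{2}\,dx$ from $I'(t)$ by a $\delta$-weighted Cauchy--Schwarz together with a sufficiently large additive constant $M$, working with the exponent $\xi=\sqrt{\alpha/2}-1$ rather than your $\sigma=\frac{\alpha-2}{4}$.
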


Next, in Section \ref{BCM}, we discuss the blow-up solutions using concavity method with the condition (C).

\section{Blow-Up: Concavity Method}\label{BCM}

In this section, we discuss the blow-up phenomena of the solution to \eqref{equation} by using concavity method, which is the main part of this paper. In fact, it is well known that the equation \eqref{equation} has a classcal solution. (See \cite{B} for more details).

The following lemma is used to prove the main thoerem.

\begin{Lemma}[See \cite{E}]\label{eigenvalue}
	There exist $\lambda_{0}>0$ and $\phi_{0}(x)\in C^{2}(\Omega)$ such that 
	\[
	\begin{cases}
	-\Delta\phi_{0}\left(x\right)=\lambda_{0}\phi_{0}\left(x\right), & x\in \Omega,\\
	\phi_{0}\left(x\right)=0, & x\in\partial \Omega,\\
	\end{cases}
	\]
Moreover, $\lambda_{0}$ is given by
\[
\lambda_{0} = \min_{u\in\mathcal{A},u\not\equiv0}\frac{\int_{\Omega}\left|\nabla u\left(x\right)\right|^{2}dx}{\int_{\Omega}\left|u\left(x\right)\right|^{2}dx}\\
\]
where $\mathcal{A}:=\left\{ u\,:\, u\in C^{2}(\Omega), u\not\equiv0, u=0 \mbox{ for } x\in \partial \Omega\right\}$.
\end{Lemma}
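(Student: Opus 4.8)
The plan is to establish both the existence of an eigenpair and the variational characterization by the direct method of the calculus of variations, working first in the Sobolev space $H_0^1(\Omega)$ and then upgrading the regularity of the minimizer to $C^2$ by elliptic regularity. Define the Rayleigh quotient $R(u)=\int_{\Omega}|\nabla u|^2\,dx\big/\int_{\Omega}|u|^2\,dx$ and set $\lambda_0:=\inf\{R(u):u\in H_0^1(\Omega),\,u\not\equiv 0\}$. The first point is that $\lambda_0>0$: this is precisely the Poincar\'e inequality on the bounded domain $\Omega$, which supplies a constant $C>0$ with $\int_{\Omega}|u|^2\,dx\le C\int_{\Omega}|\nabla u|^2\,dx$ for every $u\in H_0^1(\Omega)$, so that $R(u)\ge 1/C>0$.

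Next I would show the infimum is attained. By the homogeneity of $R$ I may restrict to the constraint $\|u\|_{L^2(\Omega)}=1$ and minimize the Dirichlet energy $\int_{\Omega}|\nabla u|^2\,dx$. Take a minimizing sequence $(u_k)$ with $\|u_k\|_{L^2}=1$ and $\int_{\Omega}|\nabla u_k|^2\,dx\to\lambda_0$; since these energies are bounded and the $L^2$ norms are fixed, $(u_k)$ is bounded in $H_0^1(\Omega)$, so a subsequence converges weakly in $H_0^1$ to some $\phi_0$. By the Rellich--Kondrachov theorem the embedding $H_0^1(\Omega)\hookrightarrow L^2(\Omega)$ is compact (here the boundedness and smoothness of $\Omega$ enter), so the subsequence converges strongly in $L^2$, whence $\|\phi_0\|_{L^2}=1$ and in particular $\phi_0\not\equiv 0$. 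Weak lower semicontinuity of the Dirichlet energy then gives $\int_{\Omega}|\nabla\phi_0|^2\,dx\le\liminf_k\int_{\Omega}|\nabla u_k|^2\,dx=\lambda_0$, which together with admissibility of $\phi_0$ forces $R(\phi_0)=\lambda_0$; thus the infimum is attained and equals $\lambda_0$.

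I would then derive the Euler--Lagrange equation. For any $v\in H_0^1(\Omega)$ the map $t\mapsto R(\phi_0+tv)$ attains its minimum at $t=0$, and differentiating at $t=0$ yields the weak formulation $\int_{\Omega}\nabla\phi_0\cdot\nabla v\,dx=\lambda_0\int_{\Omega}\phi_0 v\,dx$ for all such $v$, i.e. $\phi_0$ is a weak solution of $-\Delta\phi_0=\lambda_0\phi_0$ with zero boundary trace. Because the right-hand side is exactly as regular as $\phi_0$ and $\partial\Omega$ is smooth, an interior and boundary bootstrap using Schauder (or $L^p$) estimates upgrades $\phi_0$ to $C^2(\Omega)$ (in fact $C^\infty$), and the zero trace becomes the pointwise condition $\phi_0=0$ on $\partial\Omega$; this produces the desired eigenpair $(\lambda_0,\phi_0)$.

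Finally, to match the variational formula over the class $\mathcal{A}$ of $C^2$ functions rather than over $H_0^1(\Omega)$, I would invoke density: since $C^\infty_c(\Omega)\subset\mathcal{A}$ is dense in $H_0^1(\Omega)$, one has $\inf_{u\in\mathcal{A}}R(u)=\inf_{u\in H_0^1}R(u)=\lambda_0$, and this infimum is actually a minimum because the minimizer $\phi_0$ itself lies in $\mathcal{A}$. The main obstacle is the existence step, specifically ensuring that the weak $H_0^1$ limit of the minimizing sequence does not lose $L^2$ mass under the normalization --- this is exactly where the compact Rellich--Kondrachov embedding is indispensable --- together with the regularity bootstrap required to pass from a merely weak solution to a genuine $C^2$ eigenfunction satisfying the boundary condition classically.
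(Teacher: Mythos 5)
Your argument is correct, but note that the paper itself contains no proof of this lemma at all: it is quoted verbatim from Evans \cite{E}, so the only meaningful comparison is with the textbook argument. Evans's route (Section 6.5 of \cite{E}) is operator-theoretic: one inverts the Laplacian to obtain a bounded, compact, self-adjoint solution operator $S=(-\Delta)^{-1}$ on $L^{2}(\Omega)$, applies the spectral theorem to get an orthonormal basis of eigenfunctions and the full increasing sequence of eigenvalues, and only then derives the Rayleigh-quotient formula for $\lambda_{0}$ by expanding an arbitrary admissible $u$ in that basis. Your proposal instead runs the direct method of the calculus of variations on the Rayleigh quotient itself: Poincar\'e for $\lambda_{0}>0$, a normalized minimizing sequence, Rellich--Kondrachov to prevent loss of $L^{2}$ mass in the weak limit, weak lower semicontinuity of the Dirichlet energy, the Euler--Lagrange equation, and an elliptic bootstrap to reach $C^{2}$. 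What each buys: your approach is self-contained for the first eigenvalue alone and never invokes the spectral theorem, while Evans's yields the entire spectrum and the eigenfunction basis in one stroke; both rely on the same compactness input (Rellich--Kondrachov), merely packaged differently (compactness of $S$ versus compactness along the minimizing sequence).

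One small point deserves tightening in your final step. The class $\mathcal{A}$ in the statement consists of $C^{2}(\Omega)$ functions vanishing on $\partial\Omega$, and your density argument via $C^{\infty}_{c}(\Omega)\subset\mathcal{A}$ only delivers the inequality $\inf_{\mathcal{A}}R\leq\lambda_{0}$. For the reverse inequality $\inf_{\mathcal{A}}R\geq\lambda_{0}$ you must check that every $u\in\mathcal{A}$ with finite Rayleigh quotient actually lies in $H^{1}_{0}(\Omega)$; this is true (a function in $H^{1}(\Omega)\cap C(\overline{\Omega})$ vanishing pointwise on the smooth boundary $\partial\Omega$ belongs to $H^{1}_{0}(\Omega)$), but it is a separate standard fact, not a consequence of density, and as stated $\mathcal{A}$ even admits $C^{2}(\Omega)$ functions with infinite Dirichlet energy, for which $R(u)=+\infty$ poses no problem but which shows the inclusion $\mathcal{A}\subset H^{1}_{0}(\Omega)$ cannot be taken for granted without the finite-energy caveat. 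With that one sentence added, your proof is complete.
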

In the above, the number $\lambda_{0}$ is the first eigenvalue of $\Delta$ and $\phi_{0}$ is a corresponding eigenfunction.

Now, we state and prove our main result.

\begin{Thm*}\label{BlowB}
Let the function $f$ satisfy the condition (C). If the initial data $u_{0}$ satisfies
\begin{equation}\label{11}
-\frac{1}{2}\int_{\Omega}\left|\nabla u_{0}\left(x\right)\right|^{2}dx+\int_{\Omega}\left[\int_{0}^{u_{0}\left(x\right)}f(s)ds-\gamma\right]dx>0,
\end{equation}
then the nonnegative classical solution $u$ to the equation \eqref{equation} blows up at finite time $T^{*}$ in the sense of
\[
\lim_{t\rightarrow T^{*}}\int_{0}^{t}\sum_{x\in\overline{S}}u^{2}\left(x,s\right)ds=+\infty,
\]
where $\gamma$ is the constant in the condition (C).
\end{Thm*}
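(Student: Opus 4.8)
The plan is to run the classical concavity (Levine) argument with the auxiliary functional
\[
A(t) = \int_0^t\int_\Omega u^2(x,s)\,dx\,ds,
\]
whose finite-time divergence is precisely the asserted blow-up. First I would introduce the energy
\[
E(t) = \frac12\int_\Omega |\nabla u|^2\,dx - \int_\Omega\int_0^{u(x,t)} f(s)\,ds\,dx,
\]
and, differentiating and using the equation \eqref{equation} together with $u=0$ on $\partial\Omega$, show $E'(t) = -\int_\Omega u_t^2\,dx \le 0$, so $E(t)\le E(0)$. Rewriting the hypothesis \eqref{11} as $-E(0) > \gamma|\Omega|$ records that the initial energy lies strictly below $-\gamma|\Omega|$; this is the quantitative input that will drive the blow-up. (Here the classical regularity of $u$ cited from \cite{B} justifies differentiating under the integral and integrating by parts.)

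Next I would compute $A'(t) = \int_\Omega u^2\,dx$ and, after integration by parts, $A''(t) = -2\int_\Omega|\nabla u|^2\,dx + 2\int_\Omega u f(u)\,dx$. The heart of the proof is to feed condition (C) into $A''$: integrating (C) over $\Omega$ and eliminating $\int_\Omega\int_0^u f$ through the energy identity $\int_\Omega\int_0^u f = \frac12\int_\Omega|\nabla u|^2 - E(t)$ gives
\[
A''(t) \ge (\alpha-2)\int_\Omega|\nabla u|^2\,dx - 2\alpha E(t) - 2\beta\int_\Omega u^2\,dx - 2\gamma|\Omega|.
\]
This is the step I expect to be the crux and the genuine novelty, since it is exactly here that the domain enters. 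Applying the variational characterization of $\lambda_0$ from Lemma \ref{eigenvalue}, namely $\int_\Omega|\nabla u|^2 \ge \lambda_0\int_\Omega u^2$, and invoking the constraint $0<\beta\le\frac{(\alpha-2)\lambda_0}{2}$, the harmful term $-2\beta\int_\Omega u^2$ is completely absorbed by $(\alpha-2)\int_\Omega|\nabla u|^2 \ge (\alpha-2)\lambda_0\int_\Omega u^2$, because $(\alpha-2)\lambda_0-2\beta\ge0$. What survives is
\[
A''(t) \ge -2\alpha E(t) - 2\gamma|\Omega| = 2\alpha\int_0^t\int_\Omega u_s^2\,dx\,ds + \big(-2\alpha E(0) - 2\gamma|\Omega|\big),
\]
using $-E(t) = -E(0) + \int_0^t\int_\Omega u_s^2$. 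Since $-E(0)>\gamma|\Omega|$ and $\alpha>2$, the trailing constant is strictly positive, so $A''>0$ and $A'(t)\to\infty$.

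Finally I would close by the Cauchy--Schwarz inequality
\[
\Big(\int_0^t\int_\Omega u\,u_s\,dx\,ds\Big)^2 \le \Big(\int_0^t\int_\Omega u^2\Big)\Big(\int_0^t\int_\Omega u_s^2\Big) = A(t)\int_0^t\int_\Omega u_s^2\,dx\,ds,
\]
together with the identity $A'(t) = \int_\Omega u_0^2\,dx + 2\int_0^t\int_\Omega u\,u_s$. Combining this with the lower bound on $A''$ yields $A(t)A''(t) \ge \tfrac{\alpha}{2}\big(A'(t)-\int_\Omega u_0^2\big)^2$. Since $\tfrac{\alpha}{2}>1$ and $A'(t)\to\infty$, I can fix $\sigma\in(0,\tfrac{\alpha-2}{2})$ and a time $t_0$ beyond which $A(t)A''(t)\ge(1+\sigma)(A'(t))^2$, i.e. $(A^{-\sigma})''\le0$. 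Because \eqref{11} forces $u_0\not\equiv0$, hence $u\not\equiv0$ and $A(t_0)>0$, while $A'(t_0)>0$, the positive concave function $A^{-\sigma}$ must vanish at some finite $T^*$, forcing $A(t)\to+\infty$ as $t\to T^*$, which is the claimed blow-up. The only technical nuisance to dispatch is the lower-order shift $\int_\Omega u_0^2$ in $A'$, and this is handled precisely by the divergence $A'(t)\to\infty$ established above.
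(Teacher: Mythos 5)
Your proof is correct, and its core coincides with the paper's: the concavity method applied to $A(t)=\int_0^t\int_\Omega u^2\,dx\,ds$, condition (C) fed into $A''$, the term $-2\beta\int_\Omega u^2\,dx$ absorbed by $(\alpha-2)\int_\Omega|\nabla u|^2\,dx\ge(\alpha-2)\lambda_0\int_\Omega u^2\,dx$ via Lemma \ref{eigenvalue} and $2\beta\le(\alpha-2)\lambda_0$, and the energy identity $-E(t)=-E(0)+\int_0^t\int_\Omega u_s^2\,dx\,ds$ (your $-E(t)-\gamma|\Omega|$ is exactly the paper's $J(t)$). Where you genuinely diverge is the endgame. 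The paper shifts the functional by a constant, $I(t)=A(t)+M$, and uses a weighted Cauchy--Schwarz inequality with parameter $\delta=\sqrt{\alpha/2}-1$ so that, for $M$ large enough (depending on $J(0)>0$ and $\int_\Omega u_0^2\,dx$), the inequality $I''I-(1+\xi)(I')^2>0$ holds for \emph{all} $t\ge0$; integrating $\frac{d}{dt}\bigl[I'/I^{\xi+1}\bigr]>0$ then gives both blow-up and the explicit bound on $T^*$ recorded in the Remark. You instead keep $A$ unshifted, note that $A''\ge-2\alpha E(0)-2\gamma|\Omega|>0$ forces $A'\to\infty$, and absorb the additive term $\int_\Omega u_0^2\,dx$ in $A'$ only asymptotically, obtaining $AA''\ge(1+\sigma)(A')^2$ for $t\ge t_0$ and concluding via concavity of $A^{-\sigma}$. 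This is a legitimate and slightly more elementary variant (no choice of $M$ or $\delta$), at the cost of losing the quantitative estimate of the blow-up time. Two small points worth making explicit: the strict positivity of $-2\alpha E(0)-2\gamma|\Omega|$ and the requirement $\alpha/2>1$ both rest on $\alpha>2$, which is forced by $0<\beta\le(\alpha-2)\lambda_0/2$; and $A'(t_0)>0$ follows since $A'$ is increasing with $A'(0)=\int_\Omega u_0^2\,dx>0$, the latter because \eqref{11} gives $\int_\Omega F(u_0)\,dx>\gamma|\Omega|>0$.
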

\begin{proof}
Now, we define a functional $J$ by
\[
J\left(t\right):=-\frac{1}{2}\int_{\Omega}\left|\nabla u\left(x,t\right)\right|^{2}dx+\int_{\Omega}\left[F\left(u\left(x,t\right)\right)-\gamma\right]dx,\;\;\;t\geq0,
\]
where $F(u):=\int_{0}^{u}f(s)ds$.

Then by \eqref{11},
\[
J\left(0\right)=-\frac{1}{2}\int_{\Omega}\left|\nabla u_{0}\left(x\right)\right|^{2}dx+\int_{\Omega}\left[F(u_{0}\left(x\right))-\gamma\right]dx>0.
\]
and we can see that 
\begin{equation}\label{Jtt}
J(t)=J(0)+\int_{0}^{t}\frac{d}{dt}J(s)ds=J(0)+\int_{0}^{t}\int_{\Omega}u_{t}^{2}(x,s)dxds.
\end{equation}

Now, we introduce a new function
\begin{equation}\label{It}
I\left(t\right)=\int_{0}^{t}\int_{\Omega}u^{2}\left(x,s\right)dxds+M,\,t\geq 0,
\end{equation}
where $M>0$ is a constant to be determined later. Then it is easy to see that 
\begin{equation}\label{15}
\begin{aligned}
I'\left(t\right) & =  \int_{\Omega}u^{2}\left(x,t\right)dx\\
& =  \int_{\Omega}\int_{0}^{t}2u\left(x,s\right)u_{t}\left(x,s\right)dsdx+\int_{\Omega}u_{0}^{2}\left(x\right)dx.
\end{aligned}
\end{equation}

Then we use integration by parts, the condition (C), Lemma \ref{eigenvalue}, and \eqref{Jtt} in turn to obtain
\begin{equation}\label{14}
\begin{aligned}
I''\left(t\right)&= \frac{d}{dt}\int_{\Omega}u^{2}\left(x,t\right)dx\\
&=\int_{\Omega}2u\left(x,t\right)u_{t}\left(x,t\right)dx\\
&=\int_{\Omega}2u\left(x,t\right)\left[\Delta u\left(x,t\right) + f\left(u(x,t)\right) \right]dx\\
&=-2\int_{\Omega}\left|\nabla u\left(x,t\right)\right|^{2}dx + \int_{\Omega}2u(x,t)f\left(u(x,t)\right)dx\\
&\geq-2\int_{\Omega}\left|\nabla u\left(x,t\right)\right|^{2}dx + \int_{\Omega}2\left[\alpha F(u(x,t))-\beta u^{2}(x,t)-\alpha \gamma\right] dx\\
&=2\alpha\left[-\frac{1}{2}\int_{\Omega}\left|\nabla u(x,t)\right|^{2}dx+\int_{\Omega}[F(u(x,t))-\gamma] dx\right]\\
&+(\alpha-2)\int_{\Omega}\left|\nabla u\left(x,t\right)\right|^{2}dx - 2\beta \int_{\Omega}u^{2}(x,t)dx\\
&\geq 2\alpha J(t)+\left[(\alpha-2)\lambda_{0}-2\beta\right]\int_{\Omega}u^{2}(x,t)dx\\
&\geq 2\alpha \left[J(0)+\int_{0}^{t}\int_{\Omega}u_{t}^{2}(x,s)dxds\right].
\end{aligned}
\end{equation}

Using the Schwarz inequality, we obtain
\begin{equation}\label{16}
\begin{aligned}
& I'\left(t\right) ^{2}\\
& \leq 4\left(1+\delta\right)\left[\int_{\Omega}\int_{0}^{t}u\left(x,s\right)u_{t}\left(x,s\right)dsdx\right]^{2}+\left(1+\frac{1}{\delta}\right)\left[\int_{\Omega}u_{0}^{2}\left(x\right)dx\right]^{2}\\
& \leq  4\left(1+\delta\right)\left[\int_{\Omega}\left(\int_{0}^{t}u^{2}\left(x,s\right)ds\right)^{\frac{1}{2}}\left(\int_{0}^{t}u_{t}^{2}\left(x,s\right)ds\right)^{\frac{1}{2}}dx\right]^{2}\\
&+\left(1+\frac{1}{\delta}\right)\left[\int_{\Omega}u_{0}^{2}\left(x\right)dx\right]^{2}\\
& \leq  4\left(1+\delta\right)\left(\int_{\Omega}\int_{0}^{t}u^{2}\left(x,s\right)dsdx\right)\left(\int_{\Omega}\int_{0}^{t}u_{t}^{2}\left(x,s\right)dsdx\right)\\
&+\left(1+\frac{1}{\delta}\right)\left[\int_{\Omega}u_{0}^{2}\left(x\right)dx\right]^{2},
\end{aligned}
\end{equation}
where $\delta>0$ is arbitrary. Combining the above estimates \eqref{It}, \eqref{14}, and \eqref{16}, we obtain that for $\xi=\delta=\sqrt{\frac{\alpha}{2}}-1>0$,
\begin{equation*}
\begin{aligned}
&I''\left(t\right)I\left(t\right)-\left(1+\xi\right)I'\left(t\right)^{2}\\
& \geq 2\alpha\left[J\left(0\right)+\int_{0}^{t}\int_{\Omega}u_{t}^{2}\left(x,s\right)dxds\right]\left[\int_{0}^{t}\int_{\Omega}u^{2}\left(x,s\right)dxds+M\right]\\
& -  4\left(1+\xi\right)\left(1+\delta\right)\left[\int_{\Omega}\int_{0}^{t}u^{2}\left(x,s\right)dsdx\right]\left[\int_{\Omega}\int_{0}^{t}u_{t}^{2}\left(x,s\right)dsdx\right]\\
& -  \left(1+\xi\right)\left(1+\frac{1}{\delta}\right)\left[\int_{\Omega}u_{0}^{2}\left(x\right)dx\right]^{2}\\
& \geq  2\alpha M\cdot J\left(0\right)-\left(1+\xi\right)\left(1+\frac{1}{\delta}\right)\left[\int_{\Omega}u_{0}^{2}\left(x\right)dx\right]^{2}.
\end{aligned}
\end{equation*}

Since $J\left(0\right)>0$ by assumption, we can choose $M>0$ to be large enough so that
\begin{equation}\label{17}
I''(t)I\left(t\right)-\left(1+\xi\right)I'\left(t\right)^{2}>0.
\end{equation}

This inequality \eqref{17} implies that for $t\geq0$,
\[
\frac{d}{dt}\left[\frac{I'\left(t\right)}{I^{\xi+1}\left(t\right)}\right]>0  \hbox{  i.e. }I'\left(t\right)\geq\left[\frac{I'\left(0\right)}{I^{\xi+1}\left(0\right)}\right]I^{\xi+1}\left(t\right).
\]

Therefore, it follows that $I\left(t\right)$ cannot remain finite for all $t>0$. In other words, the solutions $u\left(x,t\right)$  blow up in finite time $T^{*}$.
\end{proof}

%\begin{Rmk}
%	\begin{enumerate}[(i)]
%		\item  It is not difficult to obtain blow-up solutions to \eqref{equation}, under the condition of $f$ as follows: for some $\epsilon>0$, there exists $m>0$ such that
%		\begin{equation}\label{GUW}
%		(2+\epsilon)F(u)\leq uf(u)\,\,  u>m,
%		\end{equation}
%		which is weaker than condition of \eqref{IGU}.
%		\item We can easily see that the  condition \eqref{IGU} implies the condition \eqref{IBB}, and the condition \eqref{IBB} implies the condition (C). The following function $f$ defined by
%		\[
%		f(u)=
%		\begin{cases}
%		\frac{\lambda_{0}}{2}u, & 0\leq u\leq m,\\
%		\frac{\lambda_{0}}{4m}u^{2}, & u\geq m,
%		\end{cases}
%		\]
%		is a typical example satisfying (C). \label{Rmk4.3(i)}
%	\end{enumerate}
%\end{Rmk}

\begin{Rmk}
The above blow-up time can be estimated roughly. Taking
\[
M:=\frac{\frac{\alpha}{\alpha-2}\left(1+\sqrt{\frac{\alpha}{2}}\right)\left[{\int_{\Omega}u_{0}^{2}\left(x\right)dx}\right]^{2}}{2\alpha\left[-\frac{1}{2}\int_{\Omega}\left|\nabla u_{0}\left(x\right)\right|^{2}dx+\int_{\Omega}\left[F(u_{0}\left(x\right))-\gamma\right]dx\right]},
\]
we see that
\[
\begin{cases}
I'\left(t\right)\geq\left[\frac{\int_{\Omega}u_{0}^{2}\left(x\right)dx}{M^{\xi+1}}\right]I^{\xi+1}\left(t\right), & t>0,\\
I\left(0\right)=M,
\end{cases}
\]
which implies
\[
I\left(t\right)\geq\left[\frac{1}{M^{\xi}}-\frac{\xi\int_{\Omega}u_{0}^{2}\left(x\right)dx}{M^{\xi+1}}t\right]^{-\frac{1}{\xi}}
\]
where $\xi=\sqrt{\frac{\alpha}{2}}-1>0$. Then the blow-up time $T^{*}$ satisfies
\begin{equation*}\label{22}
0<T^{*}\leq\frac{M}{\xi\int_{\Omega}u_{0}^{2}\left(x\right)dx}.
\end{equation*}
\end{Rmk}
\section*{Conflict of Interests}
\noindent The authors declare that there is no conflict of interests regarding the publication of this paper.

%%%%%%%%%%%%%%%%%%%%%%%%%%%%%%%%%%%%%%%%%%%%%%%%%%%%%%%%%%%%%%%%%%%%%%%%%%%%%%%%%%%%%%%%%%%%%%%%%%%%%%%%%%%%%%%%%%%%%%
\section*{Acknowledgments}
\noindent This work was supported by Basic Science Research Program through the National Research Foundation of Korea(NRF) funded by the Ministry of Education (NRF-2015R1D1A1A01059561). 
%In addition, the authors would like to express thanks to anonymous reviewers for their excellent suggestions.
%%%%%%%%%%%%%%%%%%%%%%%%%%%%%%%%%%%%%%%%%%%%%%%%%%%%%%%%%%%%%%%%%%%%%%%%%%%%%%%%%%%%%%%%%%%%%%%%%%%%%%%%%%%%%%%%%%%%%%%%%%%%%%%%%%%%%%%%%%%%%%%%%%%%%%
%%%%%%%%%%%%%%%%%%%%%%%%%%%%%%%%%%%%%%%%%%%%%%%%%%%%%%%%%%%%%%%%%%%%%%%%%%%%%%%%%%%%%%%%%%%%%%%%%%%%%%%%%%%%%%%%%%%%%%%%%%%%%%%%%%%%%%%%%%%%%%%%%%%%%%


\begin{thebibliography}{20}

\bibitem{B}
J. M. Ball, \emph{Remarks on blow-up and nonexistence theorems for nonlinear evolution equations}, Quart. J. Math. Oxford Ser. $28$ $(1977)$, $473–486$. 
%
\bibitem{BB}
C. Bandle and H. Brunner, \emph{Blow-up in diffusion equations, a survey}, J. Comput. Appl. Math. $97$ $(1998)$, $3-22$.
%
%%
%%
%%
%%\bibitem{C}
%%F. R. K. Chung, \emph{Spectral graph theory}, CBMS Regional Conference Series in Math. $92$, Amer. Math. Soc. $1997$.
%%
%%\bibitem{CB}
%%S.-Y. Chung and C. A. Berenstein, \emph{$\omega$-harmonic functions and inverse conductivity problems on network}, SIAM J. Appl. Math. $65$ $(2005)$, $1200-1226$.
%%
%%
%%
%%\bibitem{CC}
%%S.-Y. Chung and M.-J. Choi \emph{Blow-up solutions and global solutions to discrete p-Laplacian parabolic equations}, Abstr. Appl. Anal. $(2014)$, Art. ID $351675$, $11$ pp.
%%
%%
%%\bibitem{Ch}
%%S.-Y. Chung, \emph{Critical Blow-Up and Global Existence for Discrete Nonlinear p-Laplacian Parabolic Equations},
%%Discrete Dyn. Nat. Soc. $(2014)$, Art. ID $716327$, $10$ pp.
%%
%%
%%
%%\bibitem{CDS}
%%D.M. Cvetkovic, M. Doob and H.Sachs, \emph{Spectra of graphs, Theory and applications}. Acad. Press, New York, $1980$.
%%
%%
\bibitem{CF}
L.A. Caffarelli and A. Friedman, \emph{Blow-up of solutions of nonlinear heat equations}, J.Math.Anal. Appl. $129$ $(1988)$, $409-419$.
%%
%%\bibitem{CL}
%%S.-Y. Chung, J.-H. Lee, \emph{Blow-up for Discrete Reaction-Diffusion Equations on Networks}, Appl. Anal. Discrete Math. $9$ $(2015)$, $103–119$.
%%
%%\bibitem{CLC}
%%Y.-S. Chung, Y.-S. Lee and S.-Y. Chung, \emph{Extinction and positivity of solutions of the heat equations with absorption on networks}, J. Math. Anal. Appl. $380$ $(2011)$, $642-652$.
%%
%%
\bibitem{E}
L. C. Evans, \emph{Partial Differential Equations: Second Edition (Graduate Studies in Mathematics)}, American Mathematical Society, $2010$.
%%
%\bibitem{F}
%H, Fujita, \emph{On the blowing up of solutions of the Cauchy problem for $u_{t}=\Delta{u}+u^{1+\alpha}$}, J. Fac. Sci. Univ. Tokyo Sect. $13$ $(1966)$ $109–124$.
%%
%%
\bibitem{FM}
A. Friedman and B. McLeod, \emph{Blow-up of positive solutions of semilinear heat equations}, Indiana Univ. Math. J. $34$ $(1985)$, $425-477$.
%%
%%
\bibitem{GV}
V. A. Galaktionov and J. L. V{\'{a}}zguez, \emph{Necessary and sufficient conditions for complete blow up and extinction for one-dimensional quasilinear heat equations}, Arch. Rational Mech. Anal. $129$ $(1995)$, $225-244$.
%%
%%
%\bibitem{GU}
%Y. Giga and N. Umeda, \emph{On instant blow-up for semilinear heat equations with growing initial data}, Methods Appl. Anal. $15$ $(2008)$, $185-195$.
%%
%\bibitem{H}
%B. Hu, \emph{Blow-up Theories for Semilinear Parabolic Equations Lecture Note in Mathematic $2018$}, Springer-Verlag Berlin Hadelberg, $2011$.
%
%%
\bibitem{KST}
K. Kobayashi, T. Sirao, H. Tanaka, \emph{On the growing up problem for semilinear heat equations}, J. Math. Soc. Japan $29$ $(1977)$, $407-424$.
%%
\bibitem{L}
H. A. Levine, \emph{Some nonexistence and instability theorems for formally parabolic equations of the form $Pu_{t}=-Au+\mathcal{F}(u)$}, Arch. Rat. Mech. Anal. $51$ $(1973)$, $371-386$.
%%
%%
%%
%%\bibitem{LC}
%%Y.-S. Lee and S.-Y. Chung, \emph{Extinction and positivity of solutions of the $p$-Laplacian evolution equation on networks}, J. Math. Anal. Appl. $386$ $(2012)$, $581-592$.
%%
%%
\bibitem{LP}
H. A. Levine and L. E. Payne, \emph{Nonexistence theorems for the heat equation with nonlinear boundary conditions and for the porous medium equation backward in time}, J. Diff. Equa. $16$ $(1974)$, $319–334$.

\bibitem{LPa}
H. A. Levine and L. E. Payne, \emph{Some nonexistence theorems for initial-boundary value problems with nonlinear boundary constraints}, Proc. Amer. Math. Soc. $46$ $(1974)$, $277–284$.
%%
%\bibitem{LR}
%M. P. Llanos and J. D. Rossi, \emph{Blow-up for a non-local diffusion problem with Neuman boundary conditions and reaction term}, Nonlinear Anal. $70$ $(2009)$, $1629-1640$.
%%
%%
\bibitem{M}
Meier, \emph{Blow-up of solutions of semilinear parabolic differential equations}, Z. Angew. Math. Phys. $39$ $(1988)$, $135-149$.
%
%
%\bibitem{PC}
%J.-H. Park and S.-Y. Chung, \emph{Positive solutions for discrete boundary value problems involving the p-Laplacian with potential terms}, Comp. Math. Appl. $61$ $(2011)$, $17-29$.
%
%\bibitem{PaC}
%J.-H. Park and S.-Y. Chung, \emph{The Dirichlet boundary value problems for p-Schr$\ddot{o}$dinger operators on finite networks}, J. Difference Equ. Appl. $17$ $(2011)$, $795-811$.
%

\bibitem{PP}
G.A Philippin and V. Proytcheva, \emph{Some remarks on the asymptotic behaviour of the solutions of a class of parabolic problems}, math. Methods Appl. Sci. $29$ $(2006)$, $297-307$


%\bibitem{PW}
%M.H. Protter and H. F. Weinberger, \emph{Maximum Principles in Differential Equations}, Prentice-Hall, Inc., Englewood Cliffs, N.J. $1967$. 
%%
%\bibitem{XXM}
%Q. Xin, L. Xu and C. Mu, \emph{Blow-up for the $w$-heat equation with Direchlet boundary conditions and a reaction term on graphs}, Appl. Anal. $93$ $(2014)$, $1691-1701$.


%
%\bibitem{ChC}
%M.-J. Choi and S.-Y. Chung \emph{A New Condition for Blow-up Solutions to Discrete Semilinear Heat Equations on Networks}, preprint
%
%
%
%
%




\end{thebibliography}
\end{document}